\documentclass[12pt, 14paper,reqno]{amsart}
\vsize=21.1truecm
\hsize=15.2truecm
\vskip.1in
\usepackage{amsmath,amsfonts,amssymb}
\usepackage{longtable}
\theoremstyle{plain}
\newtheorem{theorem}{Theorem}

\newtheorem{corollary}{Corollary}
\newtheorem{proposition}{Proposition}
\newtheorem{conj}{Conjecture}
\theoremstyle{proof}
\theoremstyle{definition}

\theoremstyle{remark}

\theoremstyle{lamma}

\numberwithin{equation}{section}

\usepackage{amsmath}
\usepackage{amsfonts}   
\usepackage{amssymb}
\usepackage{amssymb, amsmath, amsthm}
\usepackage[breaklinks]{hyperref}

\theoremstyle{thmrm}

\newenvironment{dedication}
    {\vspace{3ex}\begin{quotation}\begin{center}\begin{em}}
    {\par\end{em}\end{center}\end{quotation}}
    
\begin{document}
\title{Exponent of class group of certain imaginary quadratic fields}
\author{Kalyan Chakraborty and Azizul Hoque}
\address{Kalyan Chakraborty @Kalyan Chakraborty, Harish-Chandra Research Institute, HBNI,
Chhatnag Road, Jhunsi, Allahabad 211 019, India.}
\email{kalyan@hri.res.in}

\address{Azizul Hoque @Azizul Hoque Harish-Chandra Research Institute, HBNI,
Chhatnag Road, Jhunsi,  Allahabad 211 019, India.}
\email{ azizulhoque@hri.res.in}

\keywords{Quadratic field, Discriminant, Class group, Wada's conjecture}
\subjclass[2010] {Primary: 11R29, Secondary: 11R11}
\maketitle
\begin{dedication}
To Prof. Wenpeng Zhang on his $60^{th}$ birthday with great respect and friendship
\end{dedication}
\begin{abstract}
Let $n>1$ be an odd integer. We prove that there are infinitely many imaginary quadratic fields of the form $\mathbb{Q}(\sqrt{x^2-2y^n})$ whose ideal class group has an element of order $n$. This family gives a counter example to a conjecture by H. Wada \cite{WA70} on the structure of ideal class groups.
\end{abstract}

\section{Introduction}
Let $x, y$ and $n$ be positive integers. We consider the family of imaginary quadratic fields
$$
K_{x,y,n, \mu}=\mathbb{Q}(\sqrt{x^2-\mu y^n})
$$ 
with the conditions: $\gcd (x, y)=1,~ y>1,~ \mu\in \{1, 2, 4\}$ and $x^2\leq \mu y^n$.
Let $\mathcal{H}(K_{x,y,n,\mu})$ and $\mathcal{C}(K_{x,y,n,\mu})$ be respectively denote the class number and (ideal) class group of $K_{x, y,n, \mu}$. For $\mu \in \{1, 4\}$, there are many results concerning the divisibility of $\mathcal{H}(K_{x,y,n,\mu})$. We highlights some important results for these values of $\mu$ in the next two paragraphs. 

In 1922, T. Nagell \cite{NA22} proved that $\mathcal{H}(K_{x,y,n,1})$ is divisible by $n$ if both $n$ and $y$ are odd, and $\ell\mid x$, but $\ell^2 \nmid x$ for all prime divisors $\ell$ of $n$. Let $s$ be the square factor of $x^2-y^n$, that is
$$
x^2-y^n=-s^2D,
$$
where $D>0$ is the square-free part of $x^2-y^n$. For $s=1$, N. C. Ankeny and S. Chowla \cite{AC55} proved that $\mathcal{H}(K_{x,3,n,1})$ is divisible by $n$ if both $n$ and $x$ are even, and $x< (2\times 3^{n-1})^{1/2}$. In 1998, M. R. Murty \cite{RM98} considered the divisibility of $\mathcal{H}(K_{1,y,n,1})$ by $n$, when $s=1$ and $n\geq 5$ is odd. In the same paper, he further discussed this result when $s<\frac{y^{n/4}}{2^{3/2}}$. He also discussed a more general case, that is the divisibility of $\mathcal{H}(K_{x,y,n,1})$ in \cite{RM99}. K. Soundararajan \cite{SO00} (resp. A. Ito \cite{IT11}) studied the divisibility of $\mathcal{H}(K_{x,y,n,1})$ by $n$ under the condition $s<\sqrt{(y^n-x^2)/(y^{n/2}-1)}$ (resp. each prime divisor of $s$ divides $D$ also). Furthermore, Y. Kishi \cite{KI09} (resp. A. Ito \cite{IT11}, and M. Zhu and T. Wang \cite{MW12}) studied the divisibility by $n$ of $\mathcal{H}(K_{2^k,3,n,1})$ (resp. $\mathcal{H}(K_{2^k,p,n,1})$ with $p$ odd prime and $\mathcal{H}(K_{2^k,y,n,1})$ with $y$ odd integer). In \cite{HS16}, Hoque and Saikia investigated the divisibility of $\mathcal{H}(K_{x,p, n,1})$ for any positive integer $x$, odd integer $n$ and any prime $p$ satisfying certain assumptions. Recently, K. Chakraborty et al. \cite{CHKP18} discussed the divisibility by $n$ of $\mathcal{H}(K_{p,q,n,1})$ when both $p$ and $q$ are odd primes, and $n$ is an odd integer.    

On the other hand, B. H. Gross and D. E. Rohrlich \cite{GR01} (resp. J. H. E. Cohn \cite{CO02}, and K. Ishii \cite{IS11}) proved the divisibility by $n$ of $\mathcal{H}(K_{1,y,n,4})$ (resp. $\mathcal{H}(K_{1,2,n,4})$ except for $n=4$, and $\mathcal{H}(K_{1,y,n,4})$ for even $n$) for an odd integer $n$. Further, S. R. Louboutin \cite{LO09} proved that $\mathcal{C}(K_{1,y,n,4})$ has an element of order $n$ if atleast one odd prime divisor of $y$ is equal to $3\pmod 4$. 
Recently, A. Ito \cite{IT15} discussed the divisibility of $\mathcal{H}(K_{3^e,y,n,4})$ by $n$ under certain conditions. 

More recently, K. Chakraborty and A. Hoque \cite{HC18} (resp. in\cite{CH18}) proved the divisibility by $3$ of $\mathcal{H}(K_{1,y,3,2})$ (resp. $\mathcal{H}(K_{\ell, k^n,3,2\ell})$ for certain odd integers $\ell, n, k$) for any odd integer $y$. One can consult the survey \cite{CHS} for more information about the divisibility of class number of quadratic fields. In this paper, we show that $\mathcal{C}(K_{p,q,n, 2})$ has an element of order $n$ when both $p$ and $q$ are odd primes, and $n$ is an odd integer. Namely, we prove:

\begin{theorem}\label{theorem1}
Let $p$ and $q$ be distinct odd primes, and let $n\geq 3$ be an odd integer with $p^2<2q^n$ and $2q^n-p^2\ne \square$. Assume that $3q^{n/3}\ne p+2$ whenever $3\mid n$. Then $\mathcal{C}(K_{p, q,n, 2})$ has an element of order $n$.
\end{theorem}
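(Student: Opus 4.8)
The plan is to write $2q^{n}-p^{2}=s^{2}d$ with $d>1$ squarefree (here $d>1$ because $2q^{n}-p^{2}$ is not a square), so that $K_{p,q,n,2}=\mathbb{Q}(\sqrt{-d})$, and then to exhibit an ideal class of order exactly $n$. First I would record the local structure of $K$. Since $p$ and $q^{n}$ are odd, $2q^{n}-p^{2}\equiv 1\pmod 4$, so $s$ and $d$ are odd and $d\equiv 1\pmod 4$; consequently $\mathcal{O}_{K}=\mathbb{Z}[\sqrt{-d}]$, the only units are $\pm 1$ (as $d\geq 5$), and $2$ ramifies, say $(2)=\mathfrak{p}^{2}$ with $N\mathfrak{p}=2$. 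Reducing $p^{2}-2q^{n}=-s^{2}d$ modulo $q$, and using that $q\nmid sd$ (because $p,q$ are distinct primes and $n\geq 3$), shows $-d$ is a square mod $q$, so $q$ splits, $(q)=\mathfrak{q}\overline{\mathfrak{q}}$ with $\mathfrak{q}\neq\overline{\mathfrak{q}}$.

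Next I would turn the relation $p^{2}+s^{2}d=2q^{n}$ into an ideal factorisation. With $\alpha=p+s\sqrt{-d}\in\mathcal{O}_{K}$ we have $N(\alpha)=2q^{n}$, and any common ideal divisor of $(\alpha)$ and $(\overline{\alpha})$ divides $\bigl(2s\sqrt{-d},\,2p\bigr)$, which divides $(2)$ since $\gcd(p,s)=1$ and $q\nmid d$. Comparing valuations (note $v_{\mathfrak{p}}((2q^{n}))=2$ and $\overline{\mathfrak{p}}=\mathfrak{p}$), after relabelling $\mathfrak{q}\leftrightarrow\overline{\mathfrak{q}}$ if necessary we get $(\alpha)=\mathfrak{p}\,\mathfrak{q}^{\,n}$. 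Put $\mathfrak{a}=\mathfrak{p}\mathfrak{q}$. Since $n$ is odd, $\mathfrak{a}^{\,n}=\mathfrak{p}^{\,n-1}\cdot\mathfrak{p}\mathfrak{q}^{\,n}=(2)^{(n-1)/2}(\alpha)=\bigl(2^{(n-1)/2}\alpha\bigr)$ is principal, so $[\mathfrak{a}]$ has order dividing $n$ in $\mathcal{C}(K_{p,q,n,2})$.

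The remaining, and hardest, step is to show the order of $[\mathfrak{a}]$ is exactly $n$. If it were a proper divisor, then $\mathfrak{a}^{\,n/\ell}$ would be principal for some prime $\ell\mid n$; dividing a generator by the appropriate power of $2$ (possible since $v_{\mathfrak{p}}$ of that generator is the odd number $n/\ell$) produces $\gamma_{0}=a_{0}+b_{0}\sqrt{-d}$ with $a_{0},b_{0}$ odd and coprime, $(\gamma_{0})=\mathfrak{p}\mathfrak{q}^{\,n/\ell}$, and $a_{0}^{2}+d b_{0}^{2}=2q^{n/\ell}$. Raising to the $\ell$-th power and comparing with $(\alpha)=\mathfrak{p}\mathfrak{q}^{\,n}$, while using that $\ell$ is odd and the only units are $\pm 1$, yields the key identity
\[
(a_{0}+b_{0}\sqrt{-d})^{\ell}=\pm\,2^{(\ell-1)/2}\,(p+s\sqrt{-d}).
\]
Separating rational and irrational parts, the rational part is divisible by $a_{0}$ and the $\sqrt{-d}$-part by $b_{0}$; since $a_{0},b_{0}$ are odd this forces $a_{0}\mid p$ and $b_{0}\mid s$, so $a_{0}\in\{\pm1,\pm p\}$.

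It then remains to rule out each case, and this is where I expect the real obstacle to lie. If $a_{0}=\pm 1$ then $1+d b_{0}^{2}=2q^{n/\ell}$, and for $\ell=3$ the rational part of the identity collapses to exactly $3q^{n/3}=p+2$, which is excluded by hypothesis; for $\ell\geq 5$ one would estimate the dominant term $\ell\, b_{0}^{\,\ell-1}d^{(\ell-1)/2}$ against $2^{(\ell-1)/2}p$ using $p^{2}<2q^{n}$. If $a_{0}=\pm p$ then $p^{2}+d b_{0}^{2}=2q^{n/\ell}$; combining this with $p^{2}+d s^{2}=2q^{n}$, with $b_{0}\mid s$, and with the rational part of the identity leads to a Pell-type relation in $q^{n/\ell}$, whose solutions have to be eliminated by invoking the primality of $p$ and $q$ and, where needed, $2q^{n}-p^{2}\neq\square$. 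Carrying out this final case analysis cleanly — in particular the case $a_{0}=\pm p$ — is the delicate point; granting it, $[\mathfrak{a}]$ has order $n$, and hence $\mathcal{C}(K_{p,q,n,2})$ has an element of order $n$.
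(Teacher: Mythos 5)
Your outline coincides with the paper's: the factorization $(\alpha)=\mathfrak{p}\mathfrak{q}^{n}$, the class of $\mathfrak{a}=\mathfrak{p}\mathfrak{q}$ with $\mathfrak{a}^{n}=(2^{(n-1)/2}\alpha)$, the reduction of a hypothetical principality of $\mathfrak{a}^{n/\ell}$ to $\gamma_{0}^{\ell}=\pm 2^{(\ell-1)/2}\alpha$ with $a_{0}^{2}+db_{0}^{2}=2q^{n/\ell}$ (your $d$ is the paper's $D$), the conclusion $a_{0}\in\{\pm1,\pm p\}$, $b_{0}\mid s$, and the collapse of the case $\ell=3$, $a_{0}=\pm1$ to $3q^{n/3}=p+2$ are exactly the paper's Proposition 1 (with $u+v\sqrt{-D}=\mp\gamma_{0}$) and its deduction of Theorem 1. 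But the two steps you leave open are the whole substance. For $\ell\ge 5$ your size estimate cannot succeed: $|\gamma_{0}^{\ell}|=(2q^{n/\ell})^{\ell/2}=2^{\ell/2}q^{n/2}$, while $2^{(\ell-1)/2}p$ can be anything up to $2^{\ell/2}q^{n/2}$ under the sole hypothesis $p^{2}<2q^{n}$, so the dominant term $\ell(db_{0}^{2})^{(\ell-1)/2}\approx\ell\,2^{(\ell-1)/2}q^{n(\ell-1)/(2\ell)}$ is neither always larger nor always smaller than $2^{(\ell-1)/2}p$; genuine arithmetic input is needed here. The paper disposes of $\ell\ne 3$ instantly only because its Proposition assumes $2\alpha=(u+v\sqrt{-D})^{\ell}$, whose norm gives $8q^{n}=(u^{2}+Dv^{2})^{\ell}$; but, as your (correct) reduction shows, principality of $\mathfrak{a}^{n/\ell}$ yields $\gamma_{0}^{\ell}=\pm2^{(\ell-1)/2}\alpha$, which for $\ell\ge5$ is never of the paper's form (indeed $v_{\mathfrak{p}}(2\alpha)=3$ is not divisible by $\ell$), so that shortcut does not apply to the statement one actually has to refute.

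The case $a_{0}=\pm p$ is not merely delicate: under the stated hypotheses it cannot be eliminated. In the paper this is the $u=-p$ case, whose equations combine to $3Dv^{2}=p^{2}+2$, $2p^{2}+1=3q^{n/3}$, $s=vt$ and then to $81p^{2}t^{2}-54t^{3}+27p^{2}=2(2p^{2}+1)^{3}$, claimed impossible modulo $4$; but with $p,t$ odd both sides are $\equiv 2\pmod 4$, so no contradiction arises there. Moreover the configuration really occurs: take $(p,q,n)=(13,113,3)$. Then $2q^{3}-p^{2}=2885625=225^{2}\cdot 57$ is not a square and $3q\ne p+2$, yet in $\mathbb{Q}(\sqrt{-57})$ one has $(-13+\sqrt{-57})^{3}=26+450\sqrt{-57}=2\alpha$, i.e. precisely your case $a_{0}=-p$, $b_{0}=1$ (note $2p^{2}+1=339=3\cdot113$). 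Hence $\mathfrak{p}\mathfrak{q}$ is principal, and the class group of $\mathbb{Q}(\sqrt{-57})$ has order $4$, so it has no element of order $3$: the statement fails for this triple. So the gap you flagged is not fillable as things stand; it could only be closed by an additional hypothesis excluding this configuration (for instance $3q^{n/3}\ne 2p^{2}+1$, the analogue of $3q^{n/3}\ne p+2$ that removes $a_{0}=\pm1$). Your instinct that the $a_{0}=\pm p$ case is the crux was exactly right, but as written your argument, like the paper's, does not settle it.
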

An immediate consequence of the above result is:
\begin{corollary}\label{cor}
Let $p, q$ and $n$ as in Theorem \ref{theorem1}. Then there are infinitely many imaginary quadratic fields with discriminants of the form $p^2-2q^n$ whose class number is divisible by $n$.
\end{corollary}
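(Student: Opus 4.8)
The plan is to deduce the corollary from Theorem~\ref{theorem1} by a finiteness argument, reducing the infinitude of fields to the finiteness of integral points on a superelliptic curve. First I would fix the odd integer $n\ge 3$ and a single odd prime $p$ once and for all, and let $q$ range over the odd primes; exhibiting an infinite subfamily clearly suffices to prove the corollary. By Theorem~\ref{theorem1}, whenever the triple $(p,q,n)$ meets the stated hypotheses — $p^2<2q^n$, $2q^n-p^2\ne\square$, and $3q^{n/3}\ne p+2$ in case $3\mid n$ — the group $\mathcal{C}(K_{p,q,n,2})$ contains an element of order $n$, hence $n\mid \mathcal{H}(K_{p,q,n,2})$ by Lagrange. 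The next step is to observe that these hypotheses exclude only finitely many $q$: the inequality $p^2<2q^n$ holds for all large $q$, the condition $3q^{n/3}=p+2$ pins down at most one $q$, and $2q^n-p^2=\square$ forces an integral point on $W^2=2X^n-p^2$, of which there are finitely many. Thus $n\mid \mathcal{H}(K_{p,q,n,2})$ for all but finitely many primes $q$.

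The heart of the argument is to show that, as $q$ runs over these primes, the fields $K_{p,q,n,2}=\mathbb{Q}(\sqrt{p^2-2q^n})$ include infinitely many distinct ones. Writing $2q^n-p^2=s_q^2 D_q$ with $D_q>0$ squarefree, the field $K_{p,q,n,2}=\mathbb{Q}(\sqrt{-D_q})$ is determined by $D_q$. I would argue by contradiction: if only finitely many distinct fields arose, then by pigeonhole a single fixed squarefree $D$ would satisfy $2q^n-p^2=s_q^2 D$ for infinitely many primes $q$, producing infinitely many integral points $(X,Y)=(q,s_q)$ on the fixed curve $DY^2=2X^n-p^2$.

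Now I invoke the main tool. For $n\ge 3$ the polynomial $2X^n-p^2$ is separable (its roots are the $n$ distinct $n$-th roots of $p^2/2$), so $DY^2=2X^n-p^2$ is an irreducible hyperelliptic curve of genus $\lfloor (n-1)/2\rfloor\ge 1$. By Siegel's theorem on integral points of curves of positive genus, this curve carries only finitely many integral points, contradicting the existence of infinitely many $(q,s_q)$. Hence the values $D_q$ cannot be confined to a finite set; the fields $\mathbb{Q}(\sqrt{-D_q})$ are therefore pairwise distinct along an infinite sequence of $q$, and each has class number divisible by $n$, which is exactly the assertion of the corollary.

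The main subtlety — and the reason a purely elementary size estimate does not close the argument — is that the square factor $s_q$ in $2q^n-p^2=s_q^2 D_q$ is a priori uncontrolled, so one cannot simply assert $D_q\to\infty$ as $q\to\infty$. It is precisely this possibility (a bounded squarefree part recurring for infinitely many $q$) that Siegel's theorem eliminates, and identifying that the correct fixed object is the curve $DY^2=2X^n-p^2$ is the crux. Beyond this, the remaining work is routine bookkeeping: verifying separability and the genus bound $\lfloor (n-1)/2\rfloor\ge 1$ for every admissible odd $n\ge 3$ (including the boundary case $n=3$, where the curve has genus one), and checking that the finitely many excluded primes $q$ have no effect on the conclusion.
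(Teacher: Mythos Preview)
Your argument is correct. The paper itself gives no proof of the corollary beyond calling it an ``immediate consequence'' of Theorem~\ref{theorem1}, so there is no explicit argument to compare against; what you have supplied is precisely the standard justification used throughout this literature (cf.\ the argument style in \cite{CHKP18}). Fixing $p$ and $n$, discarding the finitely many primes $q$ that violate the hypotheses, and then invoking Siegel's theorem to rule out a recurring squarefree part $D$ is exactly the expected mechanism, and you have correctly identified that the only nontrivial point is controlling the square factor $s_q$ rather than the size of $2q^n-p^2$.

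One small remark: you invoke Siegel twice, once to bound the exceptional $q$ with $2q^n-p^2=\square$ (the curve $W^2=2X^n-p^2$) and once for the main pigeonhole step (the curves $DY^2=2X^n-p^2$). The first use could be absorbed into the second by taking $D=1$, so the argument is really a single application; but this is purely cosmetic.
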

The present family of imaginary quadratic fields provides a counter example of a conjecture (namely, Conjecture \ref{conjw}) given by H. Wada \cite{WA70} in 1970. We define some notations before stating this conjecture. 
The class group of a number field can be expressed, by the structure theorem of abelian groups, as the direct product of cyclic groups of orders $h_1, h_2, \cdots, h_t$. We denote the direct product $C_{h_1}\times C_{h_2}\times \cdots\times C_{h_t}$ of cyclic groups by $[h_1, h_2, \cdots, h_t]$. By Gauss's genus theory, if there are $r$ number of distinct rational primes that ramifies in $\mathbb{Q}(\sqrt{-D})$, for some square-free integer $D>1$, then the $2$-rank of its class group is $r-1$. In other words, the $2$-Sylow subgroup of its class group has rank $r-1$. It is noted that the $2$-Sylow subgroup of class group tends to $r-2$ elementary $2$-groups and one large cyclic factor collecting the other powers of $2$ in the class number so that the $2$-Sylow subgroup of the subgroup of squares is cyclic. On the other hand, $r-1$ number of even integers are there among $h_1, h_2, \cdots, h_t$. Sometimes, the structure of the class group of $\mathbb{Q}(\sqrt{-D})$ can be trivially determined by $r$ and the class number, $h=h_1h_2\cdots h_t$. In this case, the group is cyclic when $r=1$ or $r=2$ or it is of the type $(h_1, h_2, 2^{r_1}, \cdots,2^{r_k})$ when $r\geq 3$. In this context, H. Wada \cite{WA70} gave the following conjecture.
\begin{conj}[Wada \cite{WA70}]\label{conjw}
All the class groups of imaginary quadratic fields are either cyclic or of the type $(h_1, h_2, 2^{r_1}, 2^{r_2}, \cdots, 2^{r_k})$.  
\end{conj}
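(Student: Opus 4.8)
The final statement is \emph{Wada's conjecture}, and the abstract makes clear that the purpose of this paper is to \emph{refute} it; accordingly my plan is not to prove the conjecture but to exhibit an explicit counterexample produced by the family of Theorem~\ref{theorem1}. The first step is to distil the conjecture into a single numerical obstruction. If a class group is of Wada's type $(h_1,h_2,2^{r_1},\ldots,2^{r_k})$, then among its cyclic factors only the two factors $C_{h_1}$ and $C_{h_2}$ can have order divisible by an odd prime, every remaining factor being a $2$-group; the same is a fortiori true of a cyclic group. Hence, for every odd prime $\ell$, the $\ell$-rank (the rank of the $\ell$-Sylow subgroup) of any group satisfying the conjecture is at most $2$. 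Consequently, to break the conjecture it is enough to produce a single imaginary quadratic field whose class group has $\ell$-rank at least $3$ for some odd prime $\ell$, since such a field is neither cyclic nor of Wada's type.

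To find such a field inside $K_{p,q,n,2}=\mathbb{Q}(\sqrt{p^2-2q^n})$ I would specialise to $n=3$. Theorem~\ref{theorem1} then already supplies an ideal class of order $3$, so $3\mid\mathcal{H}(K_{p,q,3,2})$ and the $3$-rank is at least $1$ for every admissible pair $(p,q)$. The remaining task is to find primes $p,q$ satisfying the hypotheses of Theorem~\ref{theorem1}, namely $p^2<2q^3$, $2q^3-p^2\ne\square$, and $3q\ne p+2$, for which the $3$-rank is in fact at least $3$. I would run a finite search over admissible $(p,q)$: for each pair extract the squarefree part $D$ of $2q^3-p^2$, count the ramified primes $r$ so as to read off the $2$-rank $r-1$ from Gauss's genus theory, and then compute the $3$-rank of $\mathcal{C}(K_{p,q,3,2})$ directly, for instance via the reduced binary quadratic forms of the relevant discriminant or by a class-group computation. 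Any pair for which $(C_3)^3$ embeds in the class group yields the desired counterexample.

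Finally, for the witnessing field I would display the full invariant-factor decomposition of $\mathcal{C}(K_{p,q,3,2})$ and observe that at least three of its invariant factors are divisible by $3$. Since Wada's type permits at most two cyclic factors of order divisible by an odd prime, this decomposition cannot be rewritten in the form $(h_1,h_2,2^{r_1},\ldots,2^{r_k})$, and the group is plainly not cyclic; this contradicts Conjecture~\ref{conjw}. The main obstacle is existence: one must actually locate a member of the comparatively thin set $\{p^2-2q^3\}$ whose $3$-rank reaches $3$ while respecting all the constraints of Theorem~\ref{theorem1}. Heuristically such fields are abundant among those with $3\mid\mathcal{H}$, so a bounded computer search should succeed, and it is the verification of the $3$-rank for that single field, rather than any general structural theorem, that is the crux of the argument.
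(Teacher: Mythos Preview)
Your diagnosis of the conjecture is exactly right: any class group of Wada's type has $\ell$-rank at most $2$ for every odd prime $\ell$, so a single imaginary quadratic field with odd-prime rank $\geq 3$ refutes it, and you correctly propose to locate such a field by computation inside the family $K_{p,q,n,2}$. This is precisely what the paper does. The only tactical divergence is your decision to restrict the search to $n=3$. The paper's actual witness is not at $n=3$ but at $(p,q,n)=(5,11,27)$: there $\mathcal{C}(K_{5,11,27,2})\cong[381006210618,6,6,2,2,2]$ with $3$-Sylow subgroup $C_{81}\times C_3\times C_3$, hence $3$-rank equal to $3$. (Table~\ref{Table2} also exhibits $3$-rank $3$ at $n=15$, e.g.\ $13^2-2\cdot 11^{15}$ with group $[479550,30,3]$.) Theorem~\ref{theorem1} alone, whether at $n=3$ or $n=27$, only forces $3$-rank $\geq 1$, so in both your plan and the paper's the counterexample is ultimately certified by a direct class-group computation rather than by the theorem. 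The risk in fixing $n=3$ is simply that the discriminants $p^2-2q^3$ are small, and none of the paper's tabulated examples with small $p,q$ and $n=3$ reach $3$-rank $3$; allowing larger $n$ lets the discriminant grow while keeping $p,q$ tiny, which is how the paper finds its witness.
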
 
In Table \ref{Table2}, we find a class group of the type $(h_1, h_2, h_3, 2^{r_1}, 2^{r_2}, \cdots, 2^{r_k})$ (see $**$ mark) which is not cyclic. This gives a counter example to Conjecture \ref{conjw}.

\section{Proof of Theorem \ref{theorem1}}
We begin the proof with the following crucial proposition.

\begin{proposition}\label{prop}
Let $p, q$ and $n$ as in Theorem \ref{theorem1}, and let $s$ be the positive integer such that 
\begin{equation}\label{eq1}
p^2-2q^n=-s^2D,
\end{equation}
where $D$ is a square-free positive integer. Then for $\alpha=p+s\sqrt{-D}$, and for any prime divisor $\ell$ of $n$, $2\alpha$ is not an $\ell$-th power of an element in the ring of integers of $K_{p,q,n, \mu}$.
\end{proposition}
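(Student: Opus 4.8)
\noindent\emph{Sketch of the intended argument.}
The plan is to suppose, towards a contradiction, that $2\alpha=\beta^{\ell}$ for some $\beta$ in the ring of integers $\mathcal{O}_K$ of $K:=K_{p,q,n,2}=\mathbb{Q}(\sqrt{-D})$ and some prime $\ell\mid n$. The first move is a norm computation: since $\alpha\overline{\alpha}=p^{2}+s^{2}D=2q^{n}$, applying $N_{K/\mathbb{Q}}$ to $2\alpha=\beta^{\ell}$ gives $N_{K/\mathbb{Q}}(\beta)^{\ell}=2^{3}q^{n}$; the $2$-adic valuation of the right-hand side is $3$, which must be a multiple of $\ell$, so $\ell=3$, and hence $3\mid n$ and $N_{K/\mathbb{Q}}(\beta)=2q^{n/3}$. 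Next I would pin down the factorisations of $(q)$ and $(2)$: one checks $q\nmid sD$ (else $q\mid p$), so $-D$ is a nonzero square modulo $q$ and $(q)=\mathfrak{q}\overline{\mathfrak{q}}$ splits; since $q\mid\mathrm{Tr}(\alpha)=2p$ is impossible, $\mathfrak{q}$ and $\overline{\mathfrak{q}}$ cannot both divide $(\alpha)$, so after relabelling $(\alpha)=\mathfrak{d}\,\mathfrak{q}^{\,n}$ with $\mathfrak{d}\mid(2)$ and $N\mathfrak{d}=2$. If $2$ splits, $(2)=\mathfrak{p}\overline{\mathfrak{p}}$ and one of $\mathfrak{p},\overline{\mathfrak{p}}$ divides $(2\alpha)=(2)(\alpha)$ to the exact power $2$, which is not a multiple of $3$, contradicting $(2\alpha)=(\beta)^{3}$. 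Hence $2$ ramifies, $D\equiv1,2\pmod 4$, $(2)=\mathfrak{p}^{2}$, and $(2\alpha)=\mathfrak{p}^{3}\mathfrak{q}^{\,n}=(\beta)^{3}$ is consistent purely at the level of ideals; so in this case a descent to elements is forced.

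For that descent, I would use that when $D\equiv1,2\pmod 4$ the unit group of $\mathcal{O}_K$ has order $2$ or $4$, so every unit is a cube; absorbing it, we may assume $\beta^{3}=2\alpha$ with $\beta=a+b\sqrt{-D}$, $a,b\in\mathbb{Z}$ (since $\mathcal{O}_K=\mathbb{Z}[\sqrt{-D}]$). Expanding $\beta^{3}$ and comparing rational and $\sqrt{-D}$-parts with $2\alpha=2p+2s\sqrt{-D}$, together with $N_{K/\mathbb{Q}}(\beta)=2q^{n/3}$, yields
\begin{equation*}
2p=a\bigl(a^{2}-3Db^{2}\bigr),\qquad 2s=b\bigl(3a^{2}-Db^{2}\bigr),\qquad a^{2}+Db^{2}=2q^{n/3}.
\end{equation*}
The first relation gives $a\mid 2p$, and a parity check on the third shows $a$ is even when $D$ is even and odd when $D$ is odd. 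If $D$ is even then $a\in\{\pm2,\pm2p\}$, and eliminating $Db^{2}$ via the third relation yields in turn $p<0$, or $p=6q^{n/3}-16$ (even), or $16p^{2}-6q^{n/3}=\pm1$ (equating an even integer with an odd one) --- each impossible for the odd prime $p$. If $D$ is odd then $a\in\{\pm1,\pm p\}$: $a=1$ gives $p<0$; $a=p$ gives $3q^{n/3}=2p^{2}-1$, impossible modulo $3$ as $p^{2}\not\equiv2\pmod 3$; and $a=-1$ gives $3q^{n/3}=p+2$, which the hypothesis excludes.

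The step I expect to be the real obstacle is the single remaining case, $a=-p$ with $D\equiv1\pmod 4$, which reduces to $3q^{n/3}=2p^{2}+1$. This relation survives both the congruence and the parity tests above, so the stated hypothesis $3q^{n/3}\ne p+2$ does not by itself dispose of it: for example $(p,q,n)=(13,113,3)$ meets every listed hypothesis, yet there $D=57$, $s=225$, and $2\alpha=(-13+\sqrt{-57})^{3}$. Closing the proof therefore seems to require also forbidding the coincidence $3q^{n/3}=2p^{2}+1$ (by strengthening the hypothesis accordingly), after which the finite list of cases above is exhausted and the proposition follows. In short, the crux is to identify precisely which auxiliary Diophantine identities allow $2\alpha$ to be a cube and to rule out each of them by an explicit condition on $p$, $q$, $n$.
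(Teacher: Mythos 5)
Your sketch follows the same route as the paper's own proof: the norm equation $N(\beta)^{\ell}=8q^{n}$ forces $\ell=3$ and $3\mid n$; since $-D\equiv 3\pmod 4$ one may write $\beta=u+v\sqrt{-D}$ with $u,v\in\mathbb{Z}$ (and absorb units, which are cubes here); comparing rational and irrational parts gives $u\mid 2p$, and the cases $u=1$, $u=p$, $u=-1$ are disposed of exactly as in the paper, the last one by the hypothesis $3q^{n/3}\ne p+2$. (Your subcases with $D$ even are vacuous, since \eqref{eq1} with $p,q$ odd forces $D\equiv 1\pmod 4$, but this costs nothing.) Where you part company with the paper is the remaining case $u=-p$, which forces $3q^{n/3}=2p^{2}+1$, and you are right that the stated hypotheses do not exclude it. The paper attempts to kill this case by substituting $s=vt$, $Dv^{2}=3p^{2}-2t$ and $3q^{n/3}=2p^{2}+1$ into \eqref{eq1}, arriving at $81p^{2}t^{2}-54t^{3}+27p^{2}=2\left(2p^{2}+1\right)^{3}$ (the displayed equation has a typo, $27t^{2}$ for $27p^{2}$), and then claims a contradiction modulo $4$; but with $p$ and $t$ odd both sides are congruent to $2\pmod 4$, so no contradiction is obtained, with either version of the typo. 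Your example $(p,q,n)=(13,113,3)$ settles the matter conclusively: all hypotheses of Proposition \ref{prop} hold, $2q^{3}-p^{2}=225^{2}\cdot 57$, and $(-13+\sqrt{-57})^{3}=26+450\sqrt{-57}=2\alpha$, so the Proposition as stated is false; moreover the class group of $\mathbb{Q}(\sqrt{-57})$ has order $4$ and type $(2,2)$, so Theorem \ref{theorem1} itself fails for this triple. In short, what you flag as ``the real obstacle'' is not a gap in your argument but an error in the paper: once one adds the hypothesis $3q^{n/3}\ne 2p^{2}+1$ whenever $3\mid n$, as you propose, your case analysis is exhaustive and yields a complete and correct proof.
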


\begin{proof}
Let $\ell$ be a prime such that $\ell\mid n$. Then $\ell$ is odd since $n$ is odd.
From \eqref{eq1}, we see that $-D\equiv 3 \pmod 4$, and thus if $2\alpha$ is an $\ell$-th power of an element in the ring of integers $\mathcal{O}_{K_{p,q,n,2}}$ in $K_{p,q,n,2}$, then we can write
\begin{equation}\label{eq2}
2\alpha=(u+v\sqrt{-D})^{\ell}
\end{equation}
for some $u,v\in \mathcal{O}_{K_{p,q,n,2}}$. Taking norm, we obtain
\begin{equation}\label{eq3}
8q^n=(u^2+Dv^2)^{\ell}.
\end{equation}
This shows that $\ell=3$ with $3\mid n$. Thus \eqref{eq3} reduces to 
\begin{equation}\label{eq4}
2q^m=u^2+Dv^2,
\end{equation}
 where $n=3m$ for some integer $m>0$. Now \eqref{eq4} implies that $u$ and $v$ either both odd or both even since $D$ is odd. If both $u$ and $v$ are even, then $2\mid q$ which is a contradiction.
It remains to treat the case when both $u$ and $v$ are odd. We compare the real and imaginary parts on both sides of \eqref{eq2}, and get
\begin{equation}\label{eq5}
2p=u^3-3uv^2D
\end{equation}
and 
\begin{equation}\label{eq6}
 2s=3u^2v-v^3D.
\end{equation}  
We see that \eqref{eq5} implies $u\mid 2p$. As $u$ is an odd integer and $p$ is odd prime, we must have
$u=\pm 1$ or $u=\pm p$.

If $u=1$, then \eqref{eq5} would imply $2p=1-3v^2D$, which is a contradiction. Similarly, if $u=p$ then \eqref{eq5} gives 
\begin{equation}\label{eq7}
2=p^2-3v^2D.
\end{equation} 
Reading \eqref{eq7} modulo $3$, we see that 
$p^2\equiv 2 \pmod 3$ which again leads to a contradiction.

Again when $u=-1$, the relations \eqref{eq5} and \eqref{eq4} give, 
\begin{equation}\label{eq8}
3Dv^2-1=2p
\end{equation}
and 
\begin{equation}\label{eq9}
Dv^2+1=2q^m.
\end{equation}
We now add \eqref{eq8} and \eqref{eq9}, and that  gives
\begin{equation}\label{eq10}
2Dv^2=p+q^m.
\end{equation}
Now subtracting \eqref{eq9} from \eqref{eq8} which leads us to
\begin{equation}\label{eq11}
Dv^2=1+p-q^m.
\end{equation}
In this case finally \eqref{eq10} and \eqref{eq11} together give
$3q^m=p+2$. This contradicts the assumption.

We are now left with the case when $u=-p$. In this case \eqref{eq4}, \eqref{eq5} and \eqref{eq6} become, 
\begin{equation}\label{eq12}
Dv^2+p^2=2q^m,
\end{equation}

\begin{equation}\label{eq13}
3Dv^2-p^2=2
\end{equation}
 and 
 \begin{equation}\label{eq14}
Dv^3+2s=3p^2v. 
 \end{equation}
Multiplying \eqref{eq12} by $3$, and then subtracting the resulting equation from \eqref{eq13}, we get
\begin{equation}\label{eq15}
2p^2+1=3q^m.
\end{equation}  
Also \eqref{eq14} shows that $v\mid s$, and thus we can write $s=vt$ for some odd integer $t$, since both $v$ and $s$ are odd. Therefore \eqref{eq14} can be rewritten as  
\begin{equation}\label{eq16}
Dv^2+2t=3p^2.
\end{equation} 
 Since $s=vt$, so that \eqref{eq1} becomes
 \begin{equation*}
 Dv^2t^2+p^2=2q^n.
 \end{equation*}
 Using \eqref{eq16}, we obtain
 \begin{equation*}
 3p^2t^2-2t^3+p^2=2q^n.
 \end{equation*}
 Since $n=3m$, so that using \eqref{eq15} the above equation reduces to
 \begin{equation*}
 81p^2t^2-54t^3+27t^2=2\left(2p^2+1\right)^3.
 \end{equation*}
 Finally reading this equation modulo $4$, we arrive at
$$ 0\equiv 2\pmod 4,$$
which is not possible. This completes the proof.
\end{proof}

\subsection*{Proof of Theorem \ref{theorem1}}\hspace*{8cm}\\
Let $s$ be the positive integer such that 
$$
p^2-2q^n=-s^2D,
$$
where $D$ is a square-free positive integer.
Let $\alpha:=p+s\sqrt{-D}$. Then $$N(\alpha)=\alpha\bar{\alpha}=2q^n,$$
where $\bar{\alpha}$ is the conjugate of $\alpha$, and they are coprime. 

We see that $q$ splits completely in $K_{p,q,n, 2}=\mathbb{Q}(\sqrt{-D})$, and thus we have,
$$
(q)=\mathfrak{q}\mathfrak{q}',
$$
where $\mathfrak{q}$ and its conjugate $\mathfrak{q}'$ are prime ideals in $\mathcal{O}_{K_{p,q,n,2}}$ different from each other.
Moreover, $(2)=\mathfrak{p}^2$ with $\mathfrak{p}=(2,1+\sqrt{-D})$. 

We see that $(\alpha)$ is not divisible by any rational integer other than $\pm 1$, and thus we can consider the following decomposition of $(\alpha)$:
$$
(\alpha)=\mathfrak{p}\mathfrak{q}^m.
$$
Then $N(\alpha)=2q^m$ and hence $n=m$. 

We now put $\mathfrak{A}= \mathfrak{p}\mathfrak{q}$. Then (since $n$ is odd)
$$\mathfrak{A}^n=\mathfrak{p}^{n-1}(\mathfrak{p}\mathfrak{q}^n)=(2^{\frac{n-1}{2}}\alpha)\subseteq(2\alpha),$$
which is principal in $\mathcal{O}_{K_{p,q,n,2}}$. Thus if $[\mathfrak{A}]$ denotes the ideal class containing $\mathfrak{A}$ then by Proposition \ref{prop}, we see that the order of $[\mathfrak{A}]$ is $n$. This completes the proof. 
\section{Numerical Examples}
Here, we provide some numerical values to corroborate Theorem \ref{theorem1}. All the computations in this paper were done using PARI/GP (version 2.7.6) \cite{PARI}. Table \ref{Table} gives the list of imaginary quadratic fields $K_{p,q,n,2}$ corresponding to the distinct primes $p$ and $q$ not larger than $17$, and odd integer $3\leq n\leq 19$. We see that absolute discriminants are not exceeding $5\times 10^{23}$, and the corresponding class numbers are very large which can go upto (about) $5.5\times 10^{11}$. It is noted that this list does not exhaust all the imaginary quadratic fields $K_{p,q,n,2}$ of discriminants not exceeding $5\times 10^{23}$. In the Table \ref{Table}, we use $*$ mark in the column for class number to indicate the failure of the assumption $``3q^{n/3}\ne p+2"$ of Theorem \ref{theorem1}.

\begin{center}
{\tiny
\begin{longtable}{|l l l   l l l l l l l|}
\caption{Numerical examples of Theorem 1.} \label{Table} \\

\hline \multicolumn{1}{|c}{$n$} & \multicolumn{1}{c}{$p$} & \multicolumn{1}{c}{$q$}& \multicolumn{1}{c}{$p^2-2q^n$} & \multicolumn{1}{c}{$h(-D)$}& \multicolumn{1}{c}{$n$} & \multicolumn{1}{c}{$p$} & \multicolumn{1}{c}{$q$}& \multicolumn{1}{c}{$p^2-2q^n$} & \multicolumn{1}{c|}{$h(-D)$}\\ \hline 
\endfirsthead

\multicolumn{10}{c}%
{{\bfseries \tablename\ \thetable{} -- continued from previous page}} \\
\hline \multicolumn{1}{|c}{$n$} & \multicolumn{1}{c}{$p$} & \multicolumn{1}{c}{$q$}& \multicolumn{1}{c}{$p^2-2q^n$} & \multicolumn{1}{c}{$h(-D)$}& \multicolumn{1}{c}{$n$} & \multicolumn{1}{c}{$p$} & \multicolumn{1}{c}{$q$}& \multicolumn{1}{c}{$p^2-2q^n$} & \multicolumn{1}{c|}{$h(-D)$}\\ \hline 
\endhead
\hline \multicolumn{10}{|r|}{{Continued on next page}} \\ \hline
\endfoot
\hline 
\endlastfoot
 3 & 3  & 5 & -241 & 12 &   3 & 3  & 7 & -677 & 30\\ \hline
 3 & 3  & 11 & -2653 & 24 & 3 & 3  & 13 & -4385 & 96\\ \hline
 3 & 3  & 17 & -9817 & 48 & 3 & 5  & 3 & -29 & 6\\ \hline
 3 & 5  & 7 & -661 & 18 &  3 & 5  & 11 & -2637 & 36\\ \hline
 3 & 5  & 13 & -4369 & 48 &  3 & 5  & 17 & -9801 & 72 \\ \hline
 3 & 7  & 3 & -5 & 2* & 3 & 7  & 5 & -201 & 12\\ \hline
 3 & 7  & 11 & -2613 & 24 &  3 & 7  & 13 & -4345 & 48 \\ \hline
 3 & 7  & 17 & -9777 & 60 &  3 & 11  & 5 & -129 & 12\\ \hline
 3 & 11  & 7 & -565 & 12 &  3 & 11  & 13 & -4273 & 24\\ \hline
 3 & 11  & 17 & -9705 & 72 &  3 & 13  & 5 & -81 & 1*\\ \hline
 3 & 13  & 7 & -517 & 12 &  3 & 13  & 11 & -2493 & 24\\ \hline
 3 & 13  & 17 & -9657 & 48 &  3 & 17  & 7 & -397 & 6 \\ \hline
 3 & 17  & 11 & -2373 & 24 &  3 & 17  & 13 & -4105 & 48\\ \hline
 5 & 3  & 5 & -6241 & 40 &  5 & 3  & 7 & -33605 & 240 \\ \hline
 5 & 3  & 11 & -322093 & 150 &  5 & 3  & 13 & -742577 & 800 \\ \hline
 5 & 3  & 17 & -2839705 & 800 &  5 & 5  & 3 & -461 & 30\\ \hline
 5 & 5  & 7 & -33589 & 150 &  5 & 5  & 11 & -322077 & 280\\ \hline
 5 & 5  & 13 & -742561 & 500 &  5 & 5  & 17 & -2839689 & 1760\\ \hline
 5 & 7  & 3 & -437 & 20 &  5 & 7  & 5 & -6201 & 80\\ \hline
 5 & 7  & 11 & -322053 & 320 &  5 & 7  & 13 & -742537 & 380\\ \hline
 5 & 7  & 17 & -2839665 & 1120 &  5 & 11  & 3 & -365 & 20\\ \hline
 5 & 11  & 5 & -6129 & 60 &  5 & 11  & 7 & -33493 & 70\\ \hline
 5 & 11  & 13 & -742465 & 480 &  5 & 11  & 17 & -2839593 & 800\\ \hline
 5 & 13  & 3 & -317 & 10 &  5 & 13  & 5 & -6081 & 60 \\ \hline
 5 & 13  & 7 & -33445 & 80 &  5 & 13  & 11 & -321933 & 440\\ \hline
 5 & 13  & 17 & -2839545 & 960 &  5 & 17  & 3 & -197 & 10 \\ \hline
 5 & 17  & 5 & -5961 & 60 &  5 & 17  & 7 & -33325 & 120 \\ \hline
 5 & 17  & 11 & -321813 & 240 &  5 & 17  & 13 & -742297 & 480 \\ \hline
 7 & 3  & 5 & -156241 & 168 &  7 & 3  & 7 & -1647077 & 1260 \\ \hline
 7 & 3  & 11 & -38974333 & 2926 &  7 & 3  & 13 & -125497025 & 11648 \\ \hline
 7 & 3  & 17 & -820677337 & 10724 &  7 & 5  & 3 & -4349 & 42 \\ \hline
 7 & 5  & 7 & -1647061 & 896 &  7 & 5  & 11 & -38974317 & 2968 \\ \hline
 7 & 5  & 13 & -125497009 & 5824 &  7 & 5  & 17 & -820677321 & 26432 \\ \hline
 7 & 7  & 3 & -4325 & 84 &  7 & 7  & 5 & -156201 & 308 \\ \hline
 7 & 7  & 11 & -38974293 & 3696 &  7 & 7  & 13 & -125496985 & 5768 \\ \hline
 7 & 7  & 17 & -820677297 & 19096 &  7 & 11  & 3 & -4253 & 42 \\ \hline
 7 & 11  & 5 & -156129 & 392 &  7 & 11  & 7 & -1646965 & 588 \\ \hline
 7 & 11  & 13 & -125496913 & 4704 &  7 & 11  & 17 & -820677225 & 18144 \\ \hline
 7 & 13  & 3 & -4205 & 56 &  7 & 13  & 5 & -156081 & 364 \\ \hline
 7 & 13  & 7 & -1646917 & 728 &  7 & 13  & 11 & -38974173 & 3360 \\ \hline
 7 & 13  & 17 & -820677177 & 16800 &  7 & 17  & 3 & -4085 & 56 \\ \hline
 7 & 17  & 5 & -155961 & 224 &  7 & 17  & 7 & -1646797 & 658 \\ \hline
 7 & 17  & 11 & -38974053 & 3780 &  7 & 17  & 13 & -125496745 & 7392 \\ \hline
 9 & 3  & 5 & -3906241 & 1440 &  9 & 3  & 7 & -80707205 & 11448 \\ \hline
 9 & 3  & 11 & -4715895373 & 29556 &  9 & 3  & 13 & -21208998737 & 162432 \\ \hline
 9 & 3  & 17 & -237175752985 & 337176 &  9 & 5  & 3 & -39341 & 198 \\ \hline
 9 & 5  & 7 & -80707189 & 7272 &  9 & 5  & 11 & -4715895357 & 67716 \\ \hline
 9 & 5  & 13 & -21208998721 & 62136 &  9 & 5  & 17 & -237175752969 & 349164 \\ \hline
 9 & 7  & 3 & -39317 & 162 &  9 & 7  & 5 & -3906201 & 2448 \\ \hline
 9 & 7  & 11 & -4715895333 & 28512 &  9 & 7  & 13 & -21208998697 & 57744 \\ \hline
 9 & 7  & 17 & -237175752945 & 463824 &  9 & 11  & 3 & -39245 & 288 \\ \hline
 9 & 11  & 5 & -3906129 & 1692 &  9 & 11  & 7 & -80707093 & 3852 \\ \hline
 9 & 11  & 13 & -21208998625 & 79200 &  9 & 11  & 17 & -237175752873 & 284256 \\ \hline
 9 & 13  & 3 & -39197 & 108 &  9 & 13  & 5 & -3906081 & 1512 \\ \hline
 9 & 13  & 7 & -80707045 & 4608 &  9 & 13  & 11 & -4715895213 & 33300 \\ \hline
 9 & 13  & 17 & -237175752825 & 228096 &  9 & 17  & 3 & -39077 & 180 \\ \hline
 9 & 17  & 5 & -3905961 & 1368 &  9 & 17  & 7 & -80706925 & 5184 \\ \hline
 9 & 17  & 11 & -4715895093 & 35712 &  9 & 17  & 13 & -21208998457 & 74376 \\ \hline
 11 & 3  & 5 & -97656241 & 3608 &  11 & 3  & 7 & -3954653477 & 46332 \\ \hline
 11 & 3  & 11 & -570623341213 & 286770 &  11 & 3  & 13 & -3584320788065 & 2956800 \\ \hline
 11 & 3  & 17 & -68543792615257 & 2056120 &  11 & 5  & 3 & -354269 & 704 \\ \hline
 11 & 5  & 7 & -3954653461 & 36432 &  11 & 5  & 11 & -570623341197 & 519200 \\ \hline
 11 & 5  & 13 & -3584320788049 & 875072 &  11 & 5  & 17 & -68543792615241 & 6392760 \\ \hline
 11 & 7  & 3 & -354245 & 528 &  11 & 7  & 5 & -97656201 & 6864 \\ \hline
 11 & 7  & 11 & -570623341173 & 340032 &  11 & 7  & 13 & -3584320788025 & 1146464 \\ \hline
 11 & 7  & 17 & -68543792615217 & 5876112 &  11 & 11  & 3 & -354173 & 528 \\ \hline
 11 & 11  & 5 & -97656129 & 8712 &  11 & 11  & 7 & -3954653365 & 39776 \\ \hline
 11 & 11  & 13 & -3584320787953 & 797720 &  11 & 11  & 17 & -68543792615145 & 3800544 \\ \hline
 11 & 13  & 3 & -354125 & 660 &  11 & 13  & 5 & -97656081 & 9944 \\ \hline
 11 & 13  & 7 & -3954653317 & 37268 &  11 & 13  & 11 & -570623341053 & 570240 \\ \hline
 11 & 13  & 17 & -68543792615097 & 4511232 &  11 & 17  & 3 & -354005 & 352 \\ \hline
 11 & 17  & 5 & -97655961 & 7216 & 11 & 17  & 7 & -3954653197 & 25872 \\ \hline
 11 & 17  & 11 & -570623340933 & 353760 &  11 & 17  & 13 & -3584320787785 & 1076416 \\ \hline
 13 & 3  & 5 & -2441406241 & 29432 &  13 & 3  & 7 & -193778020805 & 435968 \\ \hline
 13 & 3  & 11 & -69045424287853 & 4435704 &  13 & 3  & 13 & -605750213184497 & 30878952 \\ \hline
 13 & 3  & 17 & -19809156065811865 & 59575360 &  13 & 5  & 3 & -3188621 & 2028 \\ \hline
 13 & 5  & 7 & -193778020789 & 357422 &  13 & 5  & 11 & -69045424287837 & 4188392 \\ \hline
 13 & 5  & 13 & -605750213184481 & 14891136 &  13 & 5  & 17 & -19809156065811849 & 89333920 \\ \hline
 13 & 7  & 3 & -3188597 & 1612 &  13 & 7  & 5 & -2441406201 & 35984 \\ \hline
 13 & 7  & 11 & -69045424287813 & 4845568 &  13 & 7  & 13 & -605750213184457 & 10482888 \\ \hline
 13 & 7  & 17 & -19809156065811825 & 104113152 &  13 & 11  & 3 & -3188525 & 1560 \\ \hline
 13 & 11  & 5 & -2441406129 & 35412 &  13 & 11  & 7 & -193778020693 & 183716 \\ \hline
 13 & 11  & 13 & -605750213184385 & 14582464 &  13 & 11  & 17 & -19809156065811753 & 70607680 \\ \hline
 13 & 13  & 3 & -3188477 & 1716 &  13 & 13  & 5 & -2441406081 & 32552 \\ \hline
 13 & 13  & 7 & -193778020645 & 228800 & 13 & 13  & 11 & -69045424287693 & 4533152\\ \hline
 13 & 13  & 17 & -19809156065811705 & 117589680 & 13 & 17  & 3 & -3188357 & 1560 \\ \hline
 13 & 17  & 5 & -2441405961 & 22464 & 13 & 17  & 7 & -193778020525 & 221000 \\ \hline
 13 & 17  & 11 & -69045424287573 & 5311488 &  13 & 17  & 13 & -605750213184217 & 13254488 \\ \hline
 15 & 3  & 5 & -61035156241 & 133620 &  15 & 3  & 7 & -9495123019877 & 3654000 \\ \hline
 15 & 3  & 11 & -8354496338831293 & 44413440 &  15 & 3  & 13 & -102371786028181505 & 389436480 \\ \hline
 15 & 3  & 17 & -5724846103019631577 & 1053896220 &  15 & 5  & 3 & -28697789 & 6330 \\ \hline
 15 & 5  & 7 & -9495123019861 & 2182740 &  15 & 5  & 11 & -8354496338831277 & 48860640 \\ \hline
 15 & 5  & 13 & -102371786028181489 & 231737760 &  15 & 5  & 17 & -5724846103019631561 & 2429285220 \\ \hline
 15 & 7  & 3 & -28697765 & 7140 &  15 & 7  & 5 & -61035156201 & 232380 \\ \hline
 15 & 7  & 11 & -8354496338831253 & 58597320 &  15 & 7  & 13 & -102371786028181465 & 184700760 \\ \hline
 15 & 7  & 17 & -5724846103019631537 & 1804868640 &  15 & 11  & 3 & -28697693 & 4740 \\ \hline
 15 & 11  & 5 & -61035156129 & 251460 &  15 & 11  & 7 & -9495123019765 & 1636920 \\ \hline
 15 & 11  & 13 & -102371786028181393 & 129010680 &  15 & 11  & 17 & -5724846103019631465 & 1339566720 \\ \hline
 15 & 13  & 3 & -28697645 & 3960 &  15 & 13  & 5 & -61035156081 & 177120 \\ \hline
 15 & 13  & 7 & -9495123019717 & 1139880 &  15 & 13  & 11 & -8354496338831133 & 43159500 \\ \hline
 15 & 13  & 17 & -5724846103019631417 & 970122240 &  15 & 17  & 3 & -28697525 & 7200 \\ \hline
 15 & 17  & 5 & -61035155961 & 294480 &  15 & 17  & 7 & -9495123019597 & 1418310 \\ \hline
 15 & 17  & 11 & -8354496338831013 & 39988560 &  15 & 17  & 13 & -102371786028181225 & 224843520 \\ \hline
 17 & 3  & 5 & -1525878906241 & 902632 &  17 & 3  & 7 & -465261027974405 & 21068168 \\ \hline
 17 & 3  & 11 & -1010894056998587533 & 340866116 &  17 & 3  & 13 & -17300831838762675857 & 3293665952 \\ \hline
 17 & 3  & 17 & -1654480523772673528345 & 16865341776 &  17 & 5  & 3 & -258280301 & 20672 \\ \hline
 17 & 5  & 7 & -465261027974389 & 13394096 &  17 & 5  & 11 & -1010894056998587517 & 483167744 \\ \hline
 17 & 5  & 13 & -17300831838762675841 & 2677442880 &  17 & 5  & 17 & -1654480523772673528329 & 45027534568 \\ \hline
 17 & 7  & 3 & -258280277 & 17068 &  17 & 7  & 5 & -1525878906201 & 887400 \\ \hline
 17 & 7  & 11 & -1010894056998587493 & 513329824 &  17 & 7  & 13 & -17300831838762675817 & 1808666448 \\ \hline
 17 & 7  & 17 & -1654480523772673528305  & 31484143616 &  17 & 11  & 3 & -258280205 & 16320 \\ \hline
 17 & 11  & 5 & -1525878906129 & 891072 &  17 & 11  & 7 & -465261027974293 & 7670808 \\ \hline
 17 & 11  & 13 & -17300831838762675745 & 2180205664 &  17 & 11  & 17 & -1654480523772673528233 & 18565719040 \\ \hline
 17 & 13  & 3 & -258280157 & 9248 &  17 & 13  & 5 & -1525878906081 & 1364692 \\ \hline
 17 & 13  & 7 & -465261027974245 & 18608472 &  17 & 13  & 11 & -1010894056998587373 & 805270688 \\ \hline
 17 & 13  & 17 & -1654480523772673528185 & 17835810304 &  17 & 17  & 3 & -258280037 & 9928 \\ \hline
 17 & 17  & 5 & -1525878905961 & 601936 &  17 & 17  & 7 & -465261027974125 & 8867200 \\ \hline
 17 & 17  & 11 & -1010894056998587253 & 526706580 &  17 & 17  & 13 & -17300831838762675577 & 2528985824 \\ \hline
 19 & 3  & 5 & -38146972656241 & 2652552 &  19 & 3  & 7 & -22797790370746277 & 153169260 \\ \hline
 19 & 3  & 11 & -122318180896829092573 & 5845471904 &  19 & 3  & 13 & -2923840580750892221345 & 54859889096 \\ \hline
 19 & 3  & 17 & -478144871370302649694297 & 246910222416 &  19 & 5  & 3 & -2324522909 & 39976 \\ \hline
 19 & 5  & 7 & -22797790370746261 & 174949568 &  19 & 5  & 11 & -122318180896829092557 & 5849478624 \\ \hline
 19 & 5  & 13 & -2923840580750892221329 & 28846572768 &  19 & 5  & 17 & -478144871370302649694281 & 548121230352 \\ \hline
 19 & 7  & 3 & -2324522885 & 50388 &  19 & 7  & 5 & -38146972656201 & 6622792 \\ \hline
 19 & 7  & 11 & -122318180896829092533 & 4729953024 &  19 & 7  & 13 & -2923840580750892221305 & 36032330968 \\ \hline
 19 & 7  & 17 & -478144871370302649694257 & 458154357332 &  19 & 11  & 3 & -2324522813 & 37240 \\ \hline
 19 & 11  & 5 & -38146972656129 & 4247868 &  19 & 11  & 7 & -22797790370746165 & 76028880 \\ \hline
 19 & 11  & 13 & -2923840580750892221233 & 26134377792 &  19 & 11  & 17 & -478144871370302649694185 & 389010811712 \\ \hline
 19 & 13  & 3 & -2324522765 & 40356 &  19 & 13  & 5 & -38146972656081 & 5545188 \\ \hline
 19 & 13  & 7 & -22797790370746117 & 55016248 &  19 & 13  & 11 & -122318180896829092413 & 6947197088 \\ \hline
 19 & 13  & 17 & -478144871370302649694137 & 397061284992 &  19 & 17  & 3 & -2324522645 & 36784 \\ \hline
 19 & 17  & 5 & -38146972655961 & 3776896 &  19 & 17  & 7 & -22797790370745997 & 77990972 \\ \hline
 19 & 17  & 11 & -122318180896829092293 & 8470575516 &  19 & 17  & 13 & -2923840580750892221065 & 35488586232 \\
  \end{longtable}}
\end{center}

\section{Concluding Remarks}
We begin by observing that in Table \ref{Table} there are some values of $p$ and $q$ (see $*$ mark) for which the class number of the corresponding imaginary quadratic fields are not divisible by a given odd integer $n\geq 3$. These are because of the failure of the assumption ``$3q^{n/3}\ne p+2$ when $3\mid n$". However the class number of $K_{19,7,3,2}$ is $12$ that satisfies the divisibility property even though this assumption does not hold. Thus this assumption is neither necessary nor sufficient. We have found only two pairs of values of $p$ and $q$ for which this assumption does not hold. Thus it may be possible to drop this assumption by adding some exceptions for the values of the pair $(p,q)$.  

In the light of the numerical evidence we are tempted to state the following conjecture:

\begin{conj}
Let $p$ and $q$ be two distinct odd primes. For each positive odd integer $n$ and  for each positive integer $m$ such that $m$ is not a $n$-root of any rational integer, there are infinitely many imaginary quadratic fields of the form 
$\mathbb{Q}(\sqrt{p^2-mq^n})$
whose class number is divisible by $n$.
\end{conj}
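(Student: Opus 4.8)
The plan is to run the proof of Theorem~\ref{theorem1} with the modulus $2$ replaced by a general admissible $m$, keeping $m$ and the odd integer $n\ge 3$ fixed while the distinct odd primes $p,q$ vary; I read the hypothesis on $m$ as ``$m$ is not a perfect $n$-th power''. Take $p,q$ with $p^2<mq^n$, with $mq^n-p^2\ne\square$, and with $q\nmid m$ and $\gcd(p,2mD)=1$ (all but finitely many prime pairs satisfy these), write $p^2-mq^n=-s^2 D$ with $D>0$ square-free, set $K=\mathbb Q(\sqrt{-D})$ and $\alpha=p+s\sqrt{-D}$, so that $N(\alpha)=mq^n$. The first task is the ideal-theoretic bookkeeping: note that $q$ splits in $K$, since $-D\equiv(ps^{-1})^2\pmod q$; record how the prime divisors of $m$ and the prime $2$ decompose in $\mathcal O_K$ (for general $m$ the prime $2$ need not ramify, so one must allow $\mathcal O_K=\mathbb Z[(1+\sqrt{-D})/2]$); check that $\alpha$ and $\bar\alpha$ are coprime; and write $(\alpha)=\mathfrak q^n\mathfrak M$, where $\mathfrak q$ is a prime above $q$ and $\mathfrak M\mid(m)$ has norm $m$. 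Then one seeks an ideal $\mathfrak A$, assembled from $\mathfrak q$ and the primes above $m$, such that $\mathfrak A^n$ is principal, generated by a rational-integer multiple $c_1\alpha$ of $\alpha$; for $m=2$ this is exactly $\mathfrak A=\mathfrak p\mathfrak q$ and $c_1=2^{(n-1)/2}$. Producing such an $\mathfrak A$ is easy when every prime divisor of $m$ is ramified or inert in $K$; a \emph{split} prime divisor of $m$ is the first real nuisance, since then $\mathfrak l^n$ need not be principal, and one may be forced to add congruence conditions on $p,q$ or to discard finitely many exceptional pairs, just as the hypothesis ``$3q^{n/3}\ne p+2$'' intrudes in Theorem~\ref{theorem1}.

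Granting such an $\mathfrak A$, the order of $[\mathfrak A]$ in $\mathcal C(K)$ divides $n$, and the task is to prove equality. Exactly as Theorem~\ref{theorem1} is deduced from Proposition~\ref{prop}, this reduces to showing that for each prime $\ell\mid n$ a certain rational-integer multiple $c\alpha$ of $\alpha$ — dictated by the factorization of $\mathfrak A^n$, and in general \emph{not} $\alpha$ itself (for $m=2$, $\ell=3$ it is $2\alpha$) — is not an $\ell$-th power in $\mathcal O_K$. Writing $c\alpha=(u+v\sqrt{-D})^{\ell}$ and taking norms gives $c^2mq^n=(u^2+Dv^2)^{\ell}$, which forces $\ell\mid n$ together with $\ell\mid v_r(c^2m)$ for every prime $r\mid c^2m$; then comparing real and imaginary parts of $c\alpha=(u+v\sqrt{-D})^{\ell}$ yields two ``Nagell-type'' polynomial identities in $u,v,D$, the first giving $u\mid cp$ and hence restricting $u$ to $\{\pm1,\pm p\}$ (after checking $u$ is odd), and one then tries to eliminate each of the finitely many resulting subcases by elementary congruences, using that $m$ is not a perfect $n$-th power.

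This last step is where I expect the genuine obstruction, and it is presumably why the assertion is only conjectural. For $m=2$ the valuation constraint, together with $c=2$ and $\ell=3$, leaves essentially one prime $\ell$ and a short list of subcases, each killed modulo $3$ or $4$; but for general $m$ several primes $\ell\mid n$ can be compatible with $c^2mq^n=(u^2+Dv^2)^{\ell}$, the correct multiple $c$ is itself delicate to identify, and the ensuing Thue/Nagell equations in $u,v$ — with $D$ also varying with $q$ — need not have only the trivial solutions. Ruling them out uniformly appears to require deep and quite possibly ineffective Diophantine input, or at the very least an explicit $m$-dependent list of exceptional pairs $(p,q)$ extending what is already forced in the case $m=2$.

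The infinitude conclusion should then be routine. Each admissible pair $(p,q)$ gives $n\mid\mathcal H(K)$, so it suffices, for fixed $p$ (say $p=3$), to show that one fixed square-free $D$ can arise for only finitely many primes $q$. Such a $q$ produces an integer $x=s$ with $mq^n=Dx^2+p^2$, i.e.\ an integral point on the curve $Dx^2+p^2=my^n$; since $my^n-p^2$ is a square-free polynomial of degree $n\ge 3$, this curve has positive genus, so by Siegel's theorem it has only finitely many integral points and hence yields only finitely many such $q$. (The requirements $p^2<mq^n$ and $mq^n-p^2\ne\square$ likewise fail for at most finitely many $q$, the latter by Siegel's theorem applied to $w^2+p^2=my^n$.) Thus almost all the work is concentrated in the two ideal-theoretic steps above, with the non-$\ell$-th-power statement as the crux.
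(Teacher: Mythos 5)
The statement you were asked to prove is the paper's closing Conjecture: the authors do not prove it, and do not claim to. They establish only the case $m=2$ (Theorem \ref{theorem1}), recall that $m=1,4$ are known from the literature, and explicitly leave the general case as a conjecture supported by numerics. So there is no proof in the paper to compare your proposal with, and, by your own candid account, your proposal is a programme rather than a proof. The two steps on which everything hinges are left open. First, the construction of an ideal $\mathfrak A$ from $\mathfrak q$ and the primes above $m$ with $\mathfrak A^n=(c\alpha)$ for a rational integer $c$: when some prime divisor of $m$ splits in $\mathbb Q(\sqrt{-D})$ you have no control over how the two conjugate primes enter $(\alpha)$, and since $D$ varies with $q$ you cannot impose the needed conditions ``uniformly'' or discard only finitely many pairs $(p,q)$ without an actual argument; in the paper's case $m=2$ this step is trivial precisely because $2$ ramifies ($-D\equiv 3\bmod 4$). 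Second, the analogue of Proposition \ref{prop}: for general $m$ the norm equation $c^2mq^n=(u^2+Dv^2)^{\ell}$ no longer pins down a single prime $\ell$ (for $m=2$ it forces $\ell=3$), the correct multiplier $c$ is not identified, and the resulting Nagell-type equations have $D$ varying with $q$; your remark that eliminating their solutions ``appears to require deep and quite possibly ineffective Diophantine input'' is exactly the admission that this crux is not closed. A smaller point: the hypothesis ``$m$ is not an $n$-root of any rational integer'' is garbled in the paper, and your reading ($m$ not a perfect $n$-th power, with $p,q$ varying while $m,n$ are fixed) is the sensible one, but it should be stated as an interpretation rather than silently assumed.

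What you do supply beyond the paper is the infinitude mechanism: fixing $p$ and letting $q$ run, each square-free $D$ can arise from only finitely many $q$ because $Dx^2+p^2=my^n$ is a positive-genus curve, so Siegel's theorem applies. That is correct and is actually more than the paper writes down even for its own Corollary \ref{cor} (stated there as ``immediate''). But this step is peripheral: it only converts ``$n$ divides the class number for infinitely many admissible pairs'' into ``infinitely many fields,'' and the former is precisely what remains unproved for general $m$. As it stands, your proposal reproves nothing beyond Theorem \ref{theorem1}'s case $m=2$ and correctly locates, but does not bridge, the gap that keeps the general statement conjectural.
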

For $m=1, 4$, this conjecture is true (see \cite{CHKP18} and references therein). Further Corollary \ref{cor} shows that the conjecture is true for any odd integer $n$ when $m=2$.


 We now demonstrate the structures of class groups of $K_{p,q,n,2}$ for some values of $p,q$ and $n$. In Table \ref{Table2}, by $(h_1,h_2,\cdots,h_t)$ we mean the group $\mathbb{Z}_{h_1}\times \mathbb{Z}_{h_2}\times\cdots\times\mathbb{Z}_{h_t}$.   

\begin{center}
{\tiny
\begin{longtable}{|l l l l l l | l l l l l l||}
\caption{Structure of the class group of $K_{p,q,n,2}$.} \label{Table2} \\

\hline \multicolumn{1}{|c}{$p^2-2q^n$} & \multicolumn{1}{c}{Structure of $\mathcal{C}(K_{p,q,n,2})$} & \multicolumn{1}{c}{$2$-parts} & \multicolumn{1}{c}{$3$-parts} & \multicolumn{1}{c}{$5$-parts}& \multicolumn{1}{c|}{Remaining parts}\\ \hline 
\endfirsthead

\multicolumn{10}{c}%
{{\bfseries \tablename\ \thetable{} -- continued from previous page}} \\
\hline \multicolumn{1}{|c}{$p^2-2q^n$} & \multicolumn{1}{c}{Structure of $\mathcal{C}(K_{p,q,n,2})$} & \multicolumn{1}{c}{$2$-parts} & \multicolumn{1}{c}{$3$-parts} & \multicolumn{1}{c}{$5$-parts}& \multicolumn{1}{c|}{Remaining parts}\\ \hline 
\endhead
\hline \multicolumn{10}{|r|}{{Continued on next page}} \\ \hline
\endfoot
\hline 
\endlastfoot
$11^2-2\times 17^5$ & [20, 10, 2, 2] & (2,2,2,4)& -- & (5,5)& -- \\ \hline
$7^2-2\times 17^{13}$ & [1084512, 6, 2, 2, 2, 2] & (2,2,2,2,2,32) & (3) & -- & (11,13,79)\\ \hline
$13^2-2\times 11^{15}$ & [479550, 30, 3]& (2,2)&(3,3,3)&(5,25)&(23,139) \\ \hline
$13^2-2\times 17^{15}$ & [10105440, 12, 2, 2, 2]& (2,2,2,4,32)&(3,3)&(5)&(37,569)\\ \hline
$3^2-2\times 5^{21}$ & [565992, 6, 2, 2, 2]&(2,2,2,2,8)&(3,9)&--&(7,1123) \\ \hline
 $3^2-2\times 13^{21}$ & [7991268432, 6, 2, 2, 2, 2]&(2,2,2,2,2,16)&(3,3)&--&(7,23783537) \\ \hline
$17^2-2\times 11^{21}$ & [286454952, 12, 2, 2, 2, 2]&(2,2,2,2,4,8)&(3,9)&--&(7,568363) \\ \hline
$11^2-2\times 7^{25}$ & [292374800, 10, 2, 2, 2]& (2,2,2,2,16)&--&(5,25)&(101,7237)\\ \hline
$13^2-2\times 17^{25}$& [60345039225000, 6, 2, 2, 2]&(2,2,2,2,8)&(3,3)&(3125)&(41,59,332617) \\ \hline
$5^2-2\times 11^{27}$ &[381006210618, 6, 6, 2, 2, 2]**&(2,2,2,2,2,2)&(3,3,81)&--&(11,211,92119)\\ \hline
$5^2-2\times 13^{27}$ & [9392738579820, 6, 2, 2, 2, 2]&(2,2,2,2,2,4)&(3,27)&(5)&(59,9011,32717)\\ \hline
$5^2-2\times 17^{27}$ & [627358330621332, 6, 2, 2, 2, 2]&(2,2,2,2,2,4)&(3,81)&--&(183167,10571179)\\ \hline
$17^2-2\times 13^{27}$ & [8751451767912, 6, 2, 2, 2, 2]&(2,2,2,2,2,8)&(3,27)&--&(12251,67493)\\ \hline
$17^2-2\times 11^{29}$& [12592336322520, 2, 2, 2, 2, 2, 2]&(2,2,2,2,2,2,8)&(27)&(5)&(29,7411,54251)\\
 \end{longtable}}
\end{center}
\section*{Acknowledgements}
A. Hoque is supported by SERB N-PDF (PDF/2017/001758), Govt. of India.

\end{document}